\newtheorem{theorem}{Theorem}[section]
\newtheorem{defi}[theorem]{Definition}
\newtheorem{prop}[theorem]{Proposition}
\newtheorem{coro}[theorem]{Corollary}
\newtheorem{remark}[theorem]{Remark}
\newtheorem{ex}[theorem]{Example}
\title{GROTHENDIECK-TOPOLOGICAL GROUP OBJECTS}
\author{Joaqu\'in Luna-Torres}
\address{Programa de Matem\'aticas, Universidad Distrital Francisco Jos\'e de Caldas,  Bogot\'a D. C., Colombia (retired professor)}
\dedicatory{In memory of Carlos J. Ruiz Salguero}
\email{ jlunator@fundacionhaiko.org}
\subjclass[2000]{06A06, 06A11, 18A35, 18D35}
\keywords{ Grothendieck topology, initial Grothendieck topology, continuous morphism, $\mathfrak G$-topological category, group object, $\mathfrak{G}$-topological group object }
\begin{document} 
\maketitle 
\begin{abstract}
In analogy with the classical theory of topological groups, for  finitely complete categories enriched with Grothendieck topologies, we provide  the concepts of localized $\mathfrak{G}$-topological space, initial Grothendieck topologies and continuous morphisms,  in order to obtain the concepts of $\mathfrak{G}$-topolo\-gical monoid and $\mathfrak{G}$-topological group objects.

\end{abstract}

\section{ Introduction}

Topological groups (monoids) are very important mathematical objects with not only applications in mathematics, for example in Lie group theory, but also in physics. Topological groups are defined on topological spaces that involve points as fundamental objects of such topological spaces. 

The aim of this paper is to introduce a topological theory of groups  within any category, finitely complete, enriched with Grothendieck topologies.

The paper is organized as follows: We describe, in section $2$, the notion of Grothendieck topology as in S. MacLane and I. Moerdijk \cite{MM} . In section $3$, we present the concepts of localized $\mathfrak{G}$-topological space, initial Grothendieck topologies and continuous morphisms, and we study the lattice structure of all Grothendieck topologies on a category $\mathscr{C} $; after that, in section $4$, as in M. Forrester-Baker \cite{MFB}, we present the concepts of monoid and group objects. In section $5$ we present the concepts of {$\mathfrak{G}$-topological monoid and {$\mathfrak{G}$-topological group objects; finally, in section $6$ we give some examples of these ideas.
\section{Theoretical Considerations}
Throughout this paper, we will work within an ambient category $\mathscr{C} $ which is finitely complete.
From S. MacLane and I. Moerdijk \cite{MM}, Chapter III, we have the following:
\begin{itemize}
\item Let $\mathscr{C} $ be a small category, and let $Sets^{\mathscr{C}^{op}}$  be the corresponding functor category and let 
\begin{align*}
y: &\mathscr{C} \rightarrow  Sets^{\mathscr{C}^{op}}\\
&C\mapsto \mathscr{C}(-,C)
\end{align*}
then a sieve $\mathcal S$ on $\mathscr{C}$ is simply a subobject  $\mathcal S\subseteq y(C)$ in $Sets^{\mathscr{C}^{op}}$.

Alternatively, $\mathcal S$ may be given as a family of morphisms in $\mathscr{C}$, all with codomain $C$, such that
\[
f \in  \mathcal S \Longrightarrow f\circ g \in \mathcal S
\]
whenever this composition makes sense; in other words, $\mathcal S$ is a right ideal. 

\item If $\mathcal S$ is a sieve on $\mathscr C$ and $h: D\rightarrow C$ is any arrow to $C$, then  $h^{*}(\mathcal S) = \{g \mid cod(g) = D, h\circ g \in \mathcal S\}$  is a sieve on $D$.

\item A Grothendieck topology on a category $\mathscr{C} $ is a function $J$ which assigns to each object $C$ of $\mathscr{C} $ a collection $J(C) $ of sieves on $\mathscr{C} $, called {\bf{local G-topology on $C$} }, in such a way that
\begin{enumerate}
\item [(i)] the maximal sieve $ t_c = \{f \mid cod(f) = C\}$  is in $J(C)$;
\item[(ii)] (stability axiom) if $\mathcal S \in J(C)$, then $h^{*}(\mathcal S) \in J(D)$ for any arrow $h:D\rightarrow C$;
\item[(iii)] (transitivity axiom) if $\mathcal S \in J(C)$ and $\mathcal R$ is any sieve on $C$ such that $h^{*}(\mathcal R) \in J(D)$ for all $h: D\rightarrow C$ in $\mathcal S$, then $\mathcal R \in J(C)$.
\end{enumerate}
\item A site will mean a pair $(\mathscr C, J)$ consisting of a small category $\mathscr C$ and a Grothendieck topology $J$  on $\mathscr  C$. 
\item If $\mathcal S \in J(C)$, We say that $\mathcal S$ is a covering sieve, or that $\mathcal S$ covers $C$. We will also say that a sieve $\mathcal S$ on $\mathscr C$ covers an arrow $f: D \rightarrow  C$ if $f^{*} (\mathcal S)$ covers $D$. (So $\mathcal S$ covers $C$ iff $\mathcal S$ covers the identity arrow on $C$).
\end{itemize}

\section{Initial Grothendieck topologies and continuous morphisms}
In general topology and related areas of mathematics, the initial topology (or weak topology or limit topology or projective topology) on a set  $X$, with respect to a family of functions on $X$, is the coarsest topology on $X$ that makes those functions continuous (see N. Bourbaki \cite{NB}). These topologies are used, for example,  in the study of subspace topology,  product topology, The inverse limit of any inverse system of spaces and continuous maps and The weak topology on a locally convex space.

In this section, we are interested in studying initial topologies in some categories enriched with Grothendieck topologies.

\begin{defi}
Given a category $\mathscr C$, a Grothendieck topology $J$ on  $\mathscr C$and $X$ an object of $\mathscr C$, we say that $J(X)$ is a  {\bf localized {$\mathfrak{G}$-topology on}} $X$ and that $(X,J(X))$  is a {\bf localized $\mathfrak{G}$-topological space}.
\end{defi}
\begin{prop}
Let $\mathscr C$ be a finitely complete category,  and $J$ be a Grothendieck topology on  $\mathscr C$. For a morphism $f: B\rightarrow C$ in $\mathscr C$ and a sieve $\mathcal S \in J(C)$, let $f^{*}(\mathcal S) = \{\psi \mid cod(\psi) = B, f\circ \psi \in \mathcal S\}$ be the corresponding sieve on $B$, then  $f^{*}(J(C))=: \{g^{*}(\mathcal S)\mid \mathcal S \in J(C)\}$ is a   localized $\mathfrak{G}$-topology on $B$.
\end{prop}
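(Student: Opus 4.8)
The plan is to verify that the family $f^{*}(J(C)) = \{f^{*}(\mathcal S)\mid \mathcal S \in J(C)\}$ (reading the ``$g^{*}$'' in the statement as the pullback along the given morphism $f$) satisfies the defining conditions of a localized $\mathfrak{G}$-topology on $B$. The engine of the whole argument is a single functoriality identity for pullbacks of sieves: for any arrow $g:A\rightarrow B$ and any sieve $\mathcal S$ on $C$,
\[
g^{*}\bigl(f^{*}(\mathcal S)\bigr) = (f\circ g)^{*}(\mathcal S).
\]
First I would prove this by unwinding the definitions: an arrow $k$ with $\mathrm{cod}(k)=A$ lies in $g^{*}(f^{*}(\mathcal S))$ iff $g\circ k \in f^{*}(\mathcal S)$ iff $f\circ(g\circ k)=(f\circ g)\circ k \in \mathcal S$, which is exactly membership in $(f\circ g)^{*}(\mathcal S)$. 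Along the way I would also record the two auxiliary computations that pullback preserves the maximal sieve and commutes with intersections, both by the same direct manipulation.

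For the maximal-sieve axiom, I would exhibit $t_{B}$ as a member by taking $\mathcal S = t_{C}$: since $t_{C}\in J(C)$ and every composite $f\circ\psi$ with $\mathrm{cod}(\psi)=B$ automatically has codomain $C$ and hence lies in $t_{C}$, we get $f^{*}(t_{C})=t_{B}$, so $t_{B}\in f^{*}(J(C))$. For the stability axiom, given a member $f^{*}(\mathcal S)$ and an arbitrary arrow $h:D\rightarrow B$, the functoriality identity rewrites $h^{*}(f^{*}(\mathcal S))$ as $(f\circ h)^{*}(\mathcal S)$; because $\mathcal S\in J(C)$ and $f\circ h$ is an arrow into $C$, this pullback is again the pullback of a covering sieve, placing it in the corresponding localized family on $D$. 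For transitivity I would start from a member $f^{*}(\mathcal S)$ together with a sieve $\mathcal R$ on $B$ whose pullback along every arrow of $f^{*}(\mathcal S)$ is covering, and transport this hypothesis across $f$ — again via the composition identity — so as to invoke axiom (iii) for $J$ at the object $C$.

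The step I expect to be the genuine obstacle is making transitivity, and the upward-closure that a localized topology should enjoy, come out cleanly. The family $f^{*}(J(C))$ is defined as the literal image of $J(C)$ under pullback, and the image of an upward-closed collection need not itself be upward closed; so the delicate point is to show that a sieve $\mathcal R$ on $B$ satisfying the transitivity hypothesis is itself of the form $f^{*}(\mathcal S')$ for some covering $\mathcal S'$ on $C$ (or, failing that, to reinterpret the target as the saturation of this image). I would handle this by tracking precisely which arrows into $C$ the hypothesis forces into a covering sieve and then applying axiom (iii) for $J$, which is where the bookkeeping of domains and codomains, and the finite completeness of $\mathscr C$, must be used with care.
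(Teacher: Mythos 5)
Your pullback identity $g^{*}\bigl(f^{*}(\mathcal S)\bigr)=(f\circ g)^{*}(\mathcal S)$, proved by direct element-chasing, is exactly the engine of the paper's argument as well (there it is obtained by pasting pullback squares of subobjects over the representables $\mathscr C(-,A)\to\mathscr C(-,B)\to\mathscr C(-,C)$), and your treatment of the maximal sieve and of the stability axiom coincides in substance with parts (i) and (ii) of the paper's proof; where the paper asserts, you compute, and that part of your proposal is sound.

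The genuine gap is the one you yourself flag and then leave open: transitivity. You rightly observe that $f^{*}(J(C))$ is the literal image of $J(C)$ under $f^{*}$ and that the image of a family closed under axiom (iii) need not itself be closed under it; but your proposed repair --- track which arrows into $C$ the hypothesis forces into a covering sieve and then apply axiom (iii) for $J$ at $C$ --- cannot succeed as described, because the conclusion of axiom (iii) for $J$ produces a covering sieve $\mathcal S'\in J(C)$, and you would still have to exhibit the given sieve $\mathcal R$ on $B$ as $f^{*}(\mathcal S')$; there is no reason for $\mathcal R$ to be a pullback of anything. Concretely, if there are two parallel arrows $g_1,g_2\colon A\to B$ with $f\circ g_1=f\circ g_2$, then every sieve of the form $f^{*}(\mathcal S)$ contains $g_1$ if and only if it contains $g_2$, so the sieve generated by $g_1$ alone is never in the image of $f^{*}$; yet if $J$ is a topology for which the empty sieve covers $C$, the transitivity hypothesis for that sieve against $f^{*}(\emptyset)=\emptyset$ is vacuously satisfied. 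So the statement needs reinterpretation (for instance, replacing the image by the Grothendieck topology it generates, or by a suitable saturation) before axiom (iii) can be verified. You have in fact put your finger on the weakest point of the paper's own part (iii), which never states the transitivity hypothesis on $\mathcal R$ and simply asserts $R'\in J(C)$; but as it stands your proof, like the paper's, does not establish transitivity.
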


\begin{proof}
\begin{enumerate}
\item [(i)] Since $f$ is in the maximal  sieve $ t_c$ then for any arrow $\alpha$ with codomain $B$, \ $\phi\circ \alpha \in t_C$, therefore $\phi^{*}(t_C)=t_B$.
\item[(ii)] Suppose $S\in f^{*}(J(C))$ 
then $T=\{f\circ h\mid h\in S\}$ is a sieve in $J(C)$, therefore $(g\circ f)^{*}(T)\in J(A)$. Since, in the following diagram, the right square and the outer rectangle are pullbacks, so is the left square:

\begin{equation}\label{pb}
\begin{tikzcd}
 (f\circ g)^{*}(S)\arrow{r}  \arrow[tail]{d} & f^{*}(S) \arrow[tail]{d} \arrow{r} &  S \arrow[tail]{d}  \\
\mathscr C(-,A) \arrow{r} & \mathscr C(-,B)\arrow{r}& \mathscr C(-,C)\\
\end{tikzcd}
\end{equation}
(where the bottom edges are the natural transformations inducided by $g$ and $f$ respectively). (See S. MacLane \cite{SM}, pg 72).

For this reason,
\[
(g\circ f)^{*}(T)=g^{*}\left(f^{*}(T)\right)=g^{*}(S)\,\ \text{is a sieve in} \,\ J(A).
\]
\item[(iii)]  Let $R$ be any sieve on $B$ and $S\in f^{*}(J(C))$. Then $T=\{f\circ h\mid h\in S\}$ is a sieve in $J(C)$ and  $R'=\{f\circ r\mid r\in R\}$ is a sieve in $C$, using again diagram (\ref{pb}) we have $R' \in J(C)$, in other words, $R\in f^{*}(J(C))$.
\end{enumerate}
\end{proof}

\begin{defi}
For $J$ a Grothendieck topology on a category $\mathscr{C} $,
\begin{itemize}
\item a morphism $f:B\rightarrow C$ of $\mathscr C$ is called {\bf $\mathfrak{G}$-continuous} if  the  {\bf localized {$\mathfrak{G}$-topology on}} $B$,\,\  $f^{*}(J(C))$ satisfies
\[
J(B)\subseteq  f^{*}(J(C)).
\]
\item 
 $f^{*}(J(C))$ is the coarsest {$\mathfrak{G}$}-topology on $B$ for which $f$  is  {\bf $\mathfrak{G}$}-continuous and it is called  {\bf initial{$\mathfrak{G}$-topology on}} $B$.
\end{itemize}
\end{defi}

\begin{prop} \label{comp}
 Let $f:X\rightarrow Y$ and $g:Y\rightarrow Z$ be two morphisms  of  $\mathscr C$ $\mathfrak{G}$-continuous then $g\centerdot f$ is a  morphism of  $\mathscr C$  which is $\mathfrak{G}$-continuous.
 \end{prop}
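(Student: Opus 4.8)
The plan is to reduce the statement to the contravariant functoriality of the pullback-sieve operation, combined with the two hypotheses of $\mathfrak{G}$-continuity. Writing the composable arrows as $X \xrightarrow{f} Y \xrightarrow{g} Z$, continuity of $f$ and of $g$ means, by definition, that $J(X)\subseteq f^{*}(J(Y))$ and $J(Y)\subseteq g^{*}(J(Z))$, while the goal is the single inclusion $J(X)\subseteq (g\circ f)^{*}(J(Z))$, which is exactly the assertion that the composite $g\circ f$ is $\mathfrak{G}$-continuous.

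First I would record the functoriality identity $(g\circ f)^{*}=f^{*}\circ g^{*}$ at the level of sieves over $Z$. This is precisely the content of the pullback rectangle (\ref{pb}) already invoked above: pulling a sieve $\mathcal U$ on $Z$ back along $g$ and then along $f$ agrees with pulling it back along $g\circ f$ in a single step, because the pasting of two pullback squares is again a pullback. Consequently, at the level of whole localized topologies, $(g\circ f)^{*}(J(Z))=f^{*}\!\left(g^{*}(J(Z))\right)$, since each side is just the family $\{(g\circ f)^{*}(\mathcal U)\mid \mathcal U\in J(Z)\}$.

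The second step is to chain the inclusions. Applying the termwise operation $f^{*}(-)$ to the inclusion $J(Y)\subseteq g^{*}(J(Z))$ supplied by continuity of $g$ yields $f^{*}(J(Y))\subseteq f^{*}\!\left(g^{*}(J(Z))\right)=(g\circ f)^{*}(J(Z))$; the monotonicity used here is immediate, since $f^{*}$ is applied to each member of a family of covering sieves. Combining this with continuity of $f$, namely $J(X)\subseteq f^{*}(J(Y))$, gives the chain $J(X)\subseteq f^{*}(J(Y))\subseteq (g\circ f)^{*}(J(Z))$, which is the desired conclusion.

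I do not anticipate a genuine obstacle here. The only point demanding care is the bookkeeping in the functoriality identity: one must be sure that every $\mathcal T\in J(Y)$ fed into $f^{*}$ is indeed of the form $g^{*}(\mathcal U)$ with $\mathcal U\in J(Z)$, and this is guaranteed by the hypothesis $J(Y)\subseteq g^{*}(J(Z))$ rather than by any property of $f$ itself. Once the pasting-of-pullbacks identity from diagram (\ref{pb}) is in hand, the remainder is a purely order-theoretic chase through the defining inclusions.
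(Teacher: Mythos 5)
Your proposal is correct and follows essentially the same route as the paper's own proof: both chain the inclusions $J(X)\subseteq f^{*}(J(Y))\subseteq f^{*}(g^{*}(J(Z)))=(g\circ f)^{*}(J(Z))$ using monotonicity of $f^{*}$ and the pasting-of-pullbacks identity $(g\circ f)^{*}=f^{*}\circ g^{*}$. Your additional remarks on functoriality merely make explicit what the paper leaves implicit.
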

 \begin{proof}
 Since $f:X\rightarrow Y$ and $g:Y\rightarrow Z$ are $\mathfrak{G}$-continuous, we have 
\[
J(X)\subseteq f^{*}(J(Y)) \,\  \text{and}\,\  J(Y)\subseteq g^{*}(J(Z)),
\]

therefore 
\[
f^{*}(J(Y))\subseteq f^{*}(g^{*}(J(Z)))
\]
then
\[
J(X)\subseteq f^{*}(g^{*}(J(Z)))= (g\circ f)^{*}(J(Z)).
\]
This complete the proof.   
 \end{proof}
 As a consequence we obtain
 \begin{defi}
 The category {\bm{${\mathfrak{G}-Top} _{\small{\mathscr C}}$}} of localized $\mathfrak{G}$-spaces comprises the following data:
 \begin{enumerate}
 \item {\bf Objects}: Localized $\mathfrak{G}$-topology spaces $(X,J(X))$
\item {\bf Morphisms}: Morphisms of $\mathscr C$ which are $\mathfrak{G}$-continuous.
 \end{enumerate}
 \end{defi}

\subsection{The lattice structure of all Grothendieck topologies}
 For a category ${\mathscr C}$  we consider the set $\{J_{\alpha}\mid \alpha \in A\}$. Using Lemma $0.34$ of P. T. Johnstone \cite{PJ2}, we obtain:

 \begin{coro}\label{complete}
  For every  object $X$ of $\mathscr C$
  \[
  \{J_{\alpha}(X)\mid \alpha \in A\}
  \]
  is a complete lattice.
 \end{coro}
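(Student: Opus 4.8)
The plan is to realize the collection $\{J_{\alpha}(X)\mid \alpha \in A\}$, ordered by inclusion, as a \emph{Moore family} (closure system) inside the powerset of the set of all sieves on $X$, and then to invoke the standard fact that every Moore family is a complete lattice. Concretely, I would show that this collection is closed under arbitrary intersections; since the empty intersection yields the collection of all sieves on $X$ (which is itself a localized $\mathfrak{G}$-topology, arising from the topology that declares every sieve to be a cover), closure under intersection alone already supplies the top element and hence suffices.

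The heart of the argument is therefore to verify that Grothendieck topologies are stable under objectwise intersection: for any subfamily, the assignment $\left(\bigcap_{\beta} J_{\beta}\right)(C) := \bigcap_{\beta} J_{\beta}(C)$ is again a Grothendieck topology, so in particular $\bigcap_{\beta} J_{\beta}(X) = \left(\bigcap_{\beta} J_{\beta}\right)(X)$ is once more a localized $\mathfrak{G}$-topology on $X$. Axiom (i) is immediate, since the maximal sieve $t_C$ belongs to every $J_{\beta}(C)$; axiom (ii) persists because the stability condition is a universally quantified implication, which is inherited by any intersection. The delicate point is transitivity (iii): given a sieve $\mathcal S$ in the intersection and a sieve $\mathcal R$ on $C$ with $h^{*}(\mathcal R)$ in the intersection for every $h \in \mathcal S$, I would fix a single index $\beta$, observe that $\mathcal S \in J_{\beta}(C)$ while each pullback satisfies $h^{*}(\mathcal R) \in J_{\beta}(D)$, and invoke transitivity of the individual topology $J_{\beta}$ to force $\mathcal R \in J_{\beta}(C)$; as $\beta$ was arbitrary, $\mathcal R$ lies in the intersection.

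With closure under intersection established, the Moore-family principle produces the complete lattice structure: the meet of any subfamily is its intersection, and the join is the intersection of all localized $\mathfrak{G}$-topologies on $X$ that contain the union. The main obstacle to flag is precisely that this supremum is genuinely \emph{not} the objectwise union—a union of Grothendieck topologies typically violates transitivity—so the join must be realized as a generated (smallest containing) topology rather than by a naive pointwise formula. This is exactly the abstract content packaged in Johnstone's Lemma~$0.34$, which allows one to bypass an explicit construction of joins once arbitrary meets and a greatest element are in hand.
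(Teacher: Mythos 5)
Your proof is correct and follows essentially the route the paper intends: the paper offers no argument beyond citing Johnstone's Lemma~$0.34$ (arbitrary meets plus a top element yield a complete lattice), and your Moore-family argument --- verifying that axioms (i)--(iii) survive objectwise intersection, supplying the top element via the topology in which every sieve covers, and realizing joins as generated rather than pointwise unions --- is precisely the standard instantiation of that lemma, with the details the paper omits filled in. The only caveat, inherited from the paper's own imprecise statement, is that closure under intersection lands back in the family $\{J_{\alpha}(X)\mid \alpha\in A\}$ only if $A$ is understood to index \emph{all} Grothendieck topologies on $\mathscr{C}$, which you (reasonably) assume implicitly.
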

Using the definition of  topological functor in G.C.L. Br\"ummer \cite{GB}, we have:

\begin{theorem}
The forgetful functor $U: {\bm{{\mathfrak{G}-Top} _{\small{\mathscr C}}}} \rightarrow  \mathscr C$ is a topological functor.
\end{theorem}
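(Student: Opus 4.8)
My plan is to check directly that $U$ satisfies the defining lifting property of a topological functor in the sense of Br\"ummer \cite{GB}: every $U$-structured source must admit a unique $U$-initial lift. So I would begin with an arbitrary object $X$ of $\mathscr C$ together with a (possibly large) family of $\mathscr C$-morphisms $(f_i : X \to Y_i)_{i\in I}$, where each $(Y_i,J(Y_i))$ is a localized $\mathfrak{G}$-topological space. First I would record the easy half of the definition, namely that $U$ is faithful: it merely forgets the localized $\mathfrak{G}$-topology, so it is injective on every hom-set of $\mathfrak{G}\text{-}\mathrm{Top}_{\mathscr C}$.

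Next I would construct the candidate initial lift. By the Proposition asserting that $f^{*}(J(C))$ is a localized $\mathfrak{G}$-topology, each pullback $f_i^{*}(J(Y_i))$ is a localized $\mathfrak{G}$-topology on $X$, and by Corollary \ref{complete} these form a complete lattice, so I may set
\[
J(X) := \bigwedge_{i\in I} f_i^{*}(J(Y_i)) = \bigcap_{i\in I} f_i^{*}(J(Y_i)),
\]
the meet being the intersection since an intersection of Grothendieck topologies is again one. Because $J(X)\subseteq f_i^{*}(J(Y_i))$ for every $i$, the very definition of $\mathfrak{G}$-continuity shows that each $f_i:(X,J(X))\to(Y_i,J(Y_i))$ is $\mathfrak{G}$-continuous, so $(X,J(X))$ is a lift of the source; note that for a singleton family this recovers exactly the initial $\mathfrak{G}$-topology of the Definition, a useful consistency check.

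It then remains to prove initiality: given a localized space $(B,J(B))$ and a $\mathscr C$-morphism $h:B\to X$ such that every composite $f_i\circ h$ is $\mathfrak{G}$-continuous, I must show $h$ itself is $\mathfrak{G}$-continuous. Invoking the identity $(f_i\circ h)^{*}=h^{*}\circ f_i^{*}$ established through the pullback rectangle \eqref{pb} and reused in Proposition \ref{comp}, continuity of $f_i\circ h$ reads $J(B)\subseteq h^{*}\!\left(f_i^{*}(J(Y_i))\right)$ for all $i$, hence $J(B)\subseteq \bigcap_{i} h^{*}\!\left(f_i^{*}(J(Y_i))\right)$. What is actually wanted is $J(B)\subseteq h^{*}\!\left(\bigcap_i f_i^{*}(J(Y_i))\right)=h^{*}(J(X))$. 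The two statements differ precisely by whether the pullback operation $h^{*}$ commutes with the lattice meet, and this is the step I expect to be the main obstacle: monotonicity of $h^{*}$ yields $h^{*}\!\left(\bigcap_i K_i\right)\subseteq \bigcap_i h^{*}(K_i)$ for free, whereas the reverse inclusion $\bigcap_i h^{*}(K_i)\subseteq h^{*}\!\left(\bigcap_i K_i\right)$ — the one genuinely needed — has to be extracted from the stability of sieves under pullback encoded in diagram \eqref{pb}, using that pullback of sieves preserves intersections. Once this meet-preservation is in hand, $h$ is $\mathfrak{G}$-continuous, so $(X,J(X))$ is the $U$-initial lift; its uniqueness follows because $J(X)$ is characterized as the largest (coarsest) localized $\mathfrak{G}$-topology on $X$ making every $f_i$ $\mathfrak{G}$-continuous, and a largest element of a partially ordered set is unique. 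Therefore every structured source has a unique initial lift and $U$ is topological.
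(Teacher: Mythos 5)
Your proposal follows essentially the same route as the paper: both construct the lift as the intersection $\bigcap_{i} f_i^{*}(J(Y_i))$, verify continuity of each $f_i$ from the inclusion into each factor, and reduce initiality to the identity $(f_i\circ h)^{*}=h^{*}\circ f_i^{*}$ together with commuting $h^{*}$ past the intersection. The step you flag as the main obstacle --- that $\bigcap_{i} h^{*}(K_i)\subseteq h^{*}\bigl(\bigcap_{i} K_i\bigr)$, which is not automatic since $h^{*}(J(C))$ is defined as an image of a collection of sieves --- is precisely the equality the paper asserts without justification, so on this point you are if anything more careful than the source.
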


\begin{proof}
Let $X$ be an object of $\mathscr C$ , let  $(Y_i,J(Y_i))_{i\in I}$ be a family of objects of {\bm{${\mathfrak{G}-Top} _{\small{\mathscr C}}$}}, and  for each $i\in I$ let $f_i: X\rightarrow Y_i$  be a morphism of $\mathscr C$. We must show that for each object  $(Z,J(Z))$ 
of {\bm{${\mathfrak{G}-Top} _{\small{\mathscr C}}$}} and for each morphism $g:Z\rightarrow X$  of $\mathscr C$, $g$ is $\mathfrak{G}$-continuous if and only if each of the morphisms $f_i\circ g$ of $\mathscr C$ is $\mathfrak{G}$-continuous, when the object $X$ comes together with the localized $\mathfrak{G}$-topology $\hat J(X)=\displaystyle{\bigcap_{i\in I}}f_i^{*}(J(Y_i))$.
The morphisms $f_i;(X,\hat J(X))\rightarrow ((Y_i,J(Y_i))$ are evidently continuous, since $\hat J(X)\subseteq f_i^{*}(J(Y_i))$, for al $i\in I$, and from proposition (\ref{comp}) it readily follows that each morphism  $f_i\circ g$ of $\mathscr C$ is $\mathfrak{G}$-continuous whenever  $g$ is.

For the converse, suppose that each of the morphisms $f_i\circ g$ of $\mathscr C$ is $\mathfrak{G}$-continuous, since
\[
\begin{split}
J(Z) &\subseteq \bigcap_{i\in I}(f_i\circ g)^{*}(J(Y_i))\\
&=\bigcap_{i\in I} g_{*}((f_i)^{*}(J(Y_i))\\
&=g_{*}(\bigcap_{i\in I} (f_i)^{*}(J(Y_i))\\
&=g_{*}(\hat J(X)),
\end{split}
\]
we conclude that the morphism $g$  is $\mathfrak{G}$-continuous.
\end{proof}

\section{Monoid  and group objects}
Following M. Forrester-Baker \cite{MFB}, in a category $\mathscr{C} $ with  finite products (and hence with a  terminal object  $1$),  we can thus define:
\begin{enumerate}
\item A monoid  object $(G,\mu,\eta)$ of $\mathscr{C} $  to be an object $G$ together with the following  two arrows 
\[
\mu: G\times G\longrightarrow G,\,\,\,\,\,\,\,\ \eta: 1\longrightarrow G
\]
such that the following  two diagrams commute:
\vspace{1cm}
\begin{center}
$
\begin{tikzcd}
G\times G\times G \arrow{r} {1_{\text{\tiny G}}\times\mu} \arrow{d}[swap] {\mu \times 1_{\text{\tiny G}}}
& G\times G \arrow{d} {\mu} \\
G\times G \arrow{r}[swap]{\mu} & G\\
\end{tikzcd}
$
$
\begin{tikzcd}
1\times G\arrow{rd}[swap]{\pi_1} \arrow{r}{\eta\times 1_{\text{\tiny G}}} & G\times G \arrow{d}{\eta} & G\times 1  \arrow{l}[swap]{1_{\text{\tiny G}}\times \eta} \arrow{ld}{\pi_2}\\
  & G
\end{tikzcd}
$
\end{center}
here $1_{\text{\tiny G}}$ in $1_{\text{\tiny{G}}}\times \mu$ is the identity arrow $G\rightarrow G$, while $\pi_1, \pi_2$ are the respective projections.
\item A group object $(G,\mu,\eta, \zeta)$ is a monoid object equipped with an arrow $\zeta: G\rightarrow G$ such that the following diagram commutes:

\begin{center}
$
\begin{tikzcd}
 G \arrow[r, "\delta" ] \arrow[d] & G\times G \arrow[r, "1_{\text{\tiny G}}\times\zeta"] & G\times G \arrow [d, "\mu"]\\
1  \arrow[rr]&& G
\end{tikzcd},
$
\end{center}

where $\delta$ is the unique arrow making the following diagram commutes:

\[
 \begin{tikzcd}
   & G\arrow{ld}[swap]{1_{\text{\tiny G}}} \arrow{rd}{1_{\text{\tiny G}}} \arrow{d}{\delta}&  \\
    G &G\times G \arrow{l}{\pi_1} \arrow{r}[swap]{\pi_2}&G. \\
 \end{tikzcd}
\]

\item  An abelian  group object is a group object $(G,\mu,\eta, \zeta)$  completed by  the twist morphism $\tau: G\times G\rightarrow G\times G$ for which $\pi_1=\pi_2\circ \tau$  and $\pi_2=\pi_1\circ\tau$.

\end{enumerate}
\begin{defi}

We say that a homomorphism $f:(G_1,\mu_1,\eta_1, \zeta_1)\rightarrow (G_2,\mu_2,\eta_2, \zeta_2) $ of group objects is a morphism $f: G_1\rightarrow G_2$ of  $\mathscr{C} $ such that the following diagram commutes:

\[
\begin{tikzcd}
G_1\times G_1 \arrow{r} {f\times\ f} \arrow{d}[swap] {\mu_1 }
& G_2\times G_2 \arrow{d} {\mu_2} \\
G_1\arrow{r}[swap]{f} & G_2\\
\end{tikzcd}
\] 
\end{defi}
\begin{prop}\label{GO}
Let $f:(G_1,\mu_1,\eta_1, \zeta_1)\rightarrow (G_2,\mu_2,\eta_2, \zeta_2) $ and $g:(G_2,\mu_2,\eta_2, \zeta_2)\rightarrow (G_3,\mu_3,\eta_3, \zeta_3) $ be homomorphisms of group objects, then $g\circ f:(G_1,\mu_1,\eta_1, \zeta_1)\rightarrow (G_3,\mu_3,\eta_3, \zeta_3) $ is a homomorphism of group objects.
\end{prop}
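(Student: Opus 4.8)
The plan is to verify the single commuting square that the definition of a group-object homomorphism demands for the composite $g\circ f$, namely
\[
\mu_3 \circ \big((g\circ f)\times (g\circ f)\big) = (g\circ f)\circ \mu_1,
\]
and to obtain it by pasting together the two squares supplied by the hypotheses. By assumption we have $\mu_2 \circ (f\times f) = f\circ \mu_1$ and $\mu_3 \circ (g\times g) = g\circ \mu_2$.

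The one categorical fact I would isolate first is the functoriality of the binary product on morphisms: for the composable arrows $f$ and $g$ the equality
\[
(g\circ f)\times (g\circ f) = (g\times g)\circ (f\times f)
\]
holds in $\mathscr C$. This is immediate from the universal property of the product, since both sides are the unique arrow whose two projections are $g\circ f\circ \pi_1$ and $g\circ f\circ \pi_2$; I would record it as a one-line remark rather than reprove it in detail.

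With that in hand the verification is a direct chain of equalities:
\[
\mu_3 \circ \big((g\circ f)\times (g\circ f)\big)
= \mu_3 \circ (g\times g)\circ (f\times f)
= g\circ \mu_2 \circ (f\times f)
= g\circ f\circ \mu_1
= (g\circ f)\circ \mu_1,
\]
where the first equality is the functoriality fact, the second is the homomorphism square for $g$, and the third is the homomorphism square for $f$. This shows that $g\circ f$ is compatible with the multiplications, which is exactly what the definition requires.

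I do not expect a genuine obstacle here; the only point needing care is the bookkeeping of the product functor on arrows and composing the two pasted squares on the correct side. Since the definition of homomorphism given above constrains only the multiplication $\mu$ (and not separately the unit $\eta$ or the inversion $\zeta$), no further diagrams need to be checked, and the conclusion follows at once.
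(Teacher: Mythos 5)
Your proof is correct and follows essentially the same route as the paper, which simply pastes the two commutative squares for $f$ and $g$ into a rectangle and concludes the outer square commutes; your equational chain, together with the explicit remark that $(g\circ f)\times(g\circ f)=(g\times g)\circ(f\times f)$, is just the written-out form of that pasting and in fact makes explicit a step the paper leaves implicit.
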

\begin{proof}
By hipothesis, left and right squares of the following diagram are commutative
\begin{equation}\label{pb}
\begin{tikzcd}
 G_1\times G_1\arrow{r}{f\times f}  \arrow{d}{\mu_1} & G_2\times G_2 \arrow{d}{\mu_2} \arrow{r}{g\times g} &  G_3\times G_3 \arrow{d}{\mu_3}  \\
 G_1 \arrow{r}{f} & G_2\arrow{r}{g}& G_3 \\
\end{tikzcd}
\end{equation}
then so is the outer square.
\end{proof}
As an immediate consequence the group objects form a subcategory of $\mathscr{C} $.

\section {$\mathfrak{G}$-topological group objects}
\begin{defi}
Let $\mathscr C$ be a category with  finite products,  and $J$ be a Grothen\-dieck topology on  $\mathscr C$. A group object $G$ in $\mathscr C$ is called a  {\bf G-topological group object of $(\mathscr C, J)$} if the morphisms
\[
\mu: G\times G\longrightarrow G,\,\,\,\,\,\,\,\ \zeta: G\rightarrow G
\]
are {\bf $\mathfrak{G}$-continuous} in the  localized $\mathfrak{G}$-topology space  $(G,J(G))$.
\end{defi}

\begin{remark}
To prove that $\mu$ is $\mathfrak{G}$-continuous, it is enough to require that $G\times G$ carry the localized {$\mathfrak{G}$-topology $\pi_1^{*}(J(G)) \cap \pi_2^{*}(J(G))$.}
\end{remark}

The proof of the following theorem  is a direct  consequence of propositions \ref{comp} and \ref{GO}

\begin{theorem}
$\mathfrak{G}$-topological group objects of $(\mathscr C, J)$ constitute a subcategory $\pmb{{\mathfrak{G}-Top-Gr} _{\small{\mathscr C}}}$  of the category of group objects of $\mathscr C$.
\end{theorem}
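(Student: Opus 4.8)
The plan is to verify directly the axioms that characterize a subcategory. First I would make explicit the data: the objects of $\mathfrak{G}-Top-Gr_{\mathscr C}$ are those group objects $(G,\mu,\eta,\zeta)$ of $\mathscr C$ that are $\mathfrak{G}$-topological group objects in the sense of the preceding definition, and its morphisms are precisely those homomorphisms of group objects $f:(G_1,\mu_1,\eta_1,\zeta_1)\rightarrow(G_2,\mu_2,\eta_2,\zeta_2)$ which are moreover $\mathfrak{G}$-continuous when regarded as morphisms $f:(G_1,J(G_1))\rightarrow(G_2,J(G_2))$ of $\mathfrak{G}-Top_{\mathscr C}$. By construction these objects and morphisms already sit inside the category of group objects of $\mathscr C$, so the only thing left to establish is that the chosen class of morphisms is closed under composition and contains the identities.

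For closure under composition, suppose $f:G_1\rightarrow G_2$ and $g:G_2\rightarrow G_3$ are morphisms of $\mathfrak{G}-Top-Gr_{\mathscr C}$, so each is simultaneously a homomorphism of group objects and a $\mathfrak{G}$-continuous morphism. By Proposition \ref{GO} the composite $g\circ f$ is again a homomorphism of group objects, and by Proposition \ref{comp} it is again $\mathfrak{G}$-continuous; hence $g\circ f$ satisfies both defining conditions and is a morphism of $\mathfrak{G}-Top-Gr_{\mathscr C}$. This is exactly the point at which the theorem becomes a formal consequence of the two cited propositions.

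For the identities, I would check that for every $\mathfrak{G}$-topological group object $G$ the identity $1_G$ qualifies. It is trivially a homomorphism of group objects, and it is $\mathfrak{G}$-continuous because $1_G^{*}(J(G))=J(G)$, so that the defining inclusion $J(G)\subseteq 1_G^{*}(J(G))$ holds with equality. Thus $1_G$ is a morphism of $\mathfrak{G}-Top-Gr_{\mathscr C}$, and since composition and identities are inherited unchanged from the ambient category of group objects, the associativity and unit laws hold automatically.

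No genuine obstacle arises here; the content of the statement is organizational rather than computational. The only ingredient not literally supplied by Propositions \ref{comp} and \ref{GO} is the verification that identity morphisms lie in the subcategory, and this reduces to the elementary observation $1_G^{*}(J(G))=J(G)$.
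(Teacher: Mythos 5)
Your proposal is correct and follows exactly the route the paper intends: the paper gives no written proof beyond declaring the theorem ``a direct consequence of Propositions \ref{comp} and \ref{GO}'', which is precisely the composition argument you spell out. Your additional verification that $1_G^{*}(J(G))=J(G)$, so that identities belong to the subcategory, is a small but genuine completion of a step the paper leaves implicit.
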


\section{Examples of monoid and group objects}
\begin{ex}
As in  \cite{JLT}, the divisibility  partial ordering on $\mathbb Z^{+}$ is defined as follows: for all $k,n\in \mathbb Z^{+}$,\ $k\leqslant n$ if and only if there exists $t\in \mathbb Z^{+}$ such that $n=kt$.

We will denote by $\mathbb Z^{+}_{D}$ the partially ordered set $(\mathbb Z^{+}, \leqslant)$ and we can observe

\begin{itemize}
\item The down-set generated by $n\in \mathbb Z^{+}$ is $\downarrow n = \{k \in \mathbb Z^{+} : k \leqslant n\}= \mathfrak D_n$, the set of all divisors of $n$.
\item If $M\subseteq  \mathbb Z^{+}$ then the down-set generated by $M$ is $\downarrow M =\bigcup_{m\in M} \mathfrak D_m$.

\item Using  S. MacLane and  I. Moerdijk  results (see \cite{MM}), we have that a lattice $L$ is a partially ordered set which, considered as a category, has all binary products and all binary coproducts. In this way, if we write $m, n$ for objects of $L= \mathbb Z^{+}_{D}$ then $m\leqslant n$ if and only if there is a unique arrow $m\rightarrow n$, the coproduct of $m$ and $n$ (i.e. their pushout) is the least upper bound, or least common multiple, $m\vee n= l.c.m\{m,n\}$ and the product (i.e. their pullback) is the greatest lower bound, or greatest common divisor, $m\land n=g.c.d\{m,n\}$. 

\item It is clear that $\mathbb Z^{+}_{D}$ is a distributive lattice.
\item $(\mathbb Z^{+}_{D}, LCM, 1)$, where the morphism $ lcm: \mathbb Z^{+}_{D}\times \mathbb Z^{+}_{D}\rightarrow  \mathbb Z^{+}_{D}$ is defined by $(a,b)\mapsto l.c.m\{a,b\}$, is a {\bf monoid object}.
\item For each object $n$ of  $\mathbb Z^{+}_{D}$ we have a monoid $(\mathfrak D_n, LCM,1)$. Actually, this is a submonoid of  $(\mathbb Z^{+}_{D}, lcm, 1)$ since
\[
\begin{tikzcd}
\mathfrak D_n\times \mathfrak D_n\arrow{r} {i\times\ i} \arrow{d}[swap] {lcm }
& \mathbb Z^{+}_{D}\times \mathbb Z^{+}_{D} \arrow{d} {lcm} \\
\mathfrak D_n\arrow{r}[swap]{i} & \mathbb Z^{+}_{D}\\
\end{tikzcd}
\]
 is a commutative square, where $i$ is the inclusion map.
\end{itemize}
A Grothendieck topology  on  the category $\mathbb Z^{+}_D$ is a function $J$ which assigns to each object $n$ of  $\mathbb Z^{+}$ a collection $J(n)$ of sieves on $\mathbb Z^{+}_D$, in such a way that
\begin{enumerate}
\item[(i)] the maximal sieve $\mathfrak D_n$ (the set of all divisors of $n$) is in $J(n)$;
\item[(ii)] (stability axiom) if $S\in J(n)$ and $k\leqslant n$ then $S\cap\mathfrak D_k$ is in $J(k)$;
\item[(iii)] (transitivity axiom) if $S\in J(n)$ and $R$ is any sieve on $n$ such that $R\cap \mathfrak D_k$ is in $J(k)$ for each $k\in S$, then $R\in J(n)$.
\end{enumerate}

Some Grothendieck topologies on  $\mathbb Z^{+}_D$ are the following
\begin{itemize}

\item The discrete Grothendieck topology on $\mathbb Z^{+}_D$ is given by\linebreak $J_{dis}(n) = \mathcal D(\mathfrak D_n) $.
\item A subset $D\subseteq \mathfrak D_n$ is said to be dense below $n$ if for any $m\leqslant n$ there is $k\leqslant m$ with $k\in D$. The dense topology on $\mathbb Z^{+}_D$ is given by $J(n)= \{ D\mid k\leqslant n,\,\ \forall\,\ k\in D,\,\ \text{$D$ is a sieve dense below}\,\ n\}$ 
\end{itemize}
For these topologies, the previous monoids are {\bf $\mathfrak{G}$-topological monoids}.
\end{ex}
\begin{ex}
From G. C. Wraith \cite{GCW}, we have that the  notion of locale is straightforward to formulate in a topos, and it is now well established that the localic approach is a right way to do topology in a topos \cite{PJ1}.

To set out notation, let us denote by $Loc(\mathscr{E})$ the category of locales in the topos $\mathscr{E}$. 

If $A$ and $B$ are objects of a topos $\mathscr{E} $  we can form the locale $Map ( A, B )$ in $ \mathscr{E} $ by taking an $A \times B$-indexed family of generators
\[
<a \mapsto b>, \hspace{1cm} a\in A,\,\ b\in B
\]
satisfying the relations

\[
<a \mapsto b> \land   <a \mapsto b'>\,\ \leqslant \,\ h^{*}(b= b'), \,\,\,\,\,\,\ \bigvee_{b}<a \mapsto b>=\top.
\]

By adding the further relations
\[
<a \mapsto b> \land   <a' \mapsto b>\,\ \leqslant\,\ h^{*}(a= a')
\]
we get a locale $Inj (A, B )$ , or by adding the further relations
\[
\bigvee_{a}<a \mapsto b>=\top,
\]
we get a locale $Surj(A, B )$ , or by adding both we get a locale $Bij(A, B )$ .

We define $Perm ( A )$ to be $Bij(A, A )$ ; it is clearly a localic group, i. e. {\bf a group object in $ Loc(\mathscr{E})$}. Let us describe its structure in slightly more detail; It has an identity
\[
e: \Omega_{\mathscr{E}}\rightarrow Perm(A)\,\,\,\  \text{given by}\,\,\,\ e^{*}<a \mapsto a'>=  h^{*}(a= a').
\]

It has a multiplication
\[
m: Perm(A)\times Perm(A)\rightarrow Perm(A)
\]
given by
\[
m^{*}<a \mapsto a''>=\bigvee_{a'}\big(p_1^{*}<a \mapsto a'>\,\ \land\,\ p_2^{*}<a' \mapsto a''>\big)
\]
where $p_l, p_2$ are the projections from the product $Perm(A) \times Perm (A)$.

The inverse map is clearly given by
\[ 
<a \mapsto a'>\,\ \mapsto\,\ <a' \mapsto a> .
\]
It is straightforward to show that $(Perm ( A ),m ,e)$ is a {\bf $\mathfrak{G}$-topological group object}  with each of the following Grothendieck topologies:
\begin{itemize}
\item (The atomic topology.) Let $\mathscr C$ be a category, and define $J$  by $S \in J (C)$ iff the sieve $S$ is nonempty. 
\item (The dense topology.) Let $\mathscr C$ be a category, for a  sieve $S$, let

``$S\in J(C)$ iff for any $f:D\rightarrow C$ there is a $g:E\rightarrow D$ such that $f\circ g \in S$". 
\item The discrete Grothendieck topology on  $Loc(\mathscr{E})$.
\end{itemize}
\end{ex}
\section*{Conclusions and Comments}
We proposed the construction of initial Grothendieck topologies and continuous morphisms and a new concept of {$\mathfrak{G}$-topological group objects}. The properties  of these objects (similar to those of the topological groups) are  postponed to future works.
\section*{Acknowledgments}.
 The author wish to thank \emph{ Fundaci\'on Haiko} of Colombia for its constant encouragement during the development of this work.

\end{document}